\begin{document}
\def\eq#1{{\rm(\ref{#1})}}
\theoremstyle{plain}
\newtheorem{thm}{Theorem}[section]
\newtheorem{lem}[thm]{Lemma}
\newtheorem{prop}[thm]{Proposition}
\newtheorem{cor}[thm]{Corollary}
\theoremstyle{definition}
\newtheorem{dfn}[thm]{Definition}
\newtheorem{rem}[thm]{Remark}
\def\Ker{\mathop{\rm Ker}}
\def\Coker{\mathop{\rm Coker}}
\def\ind{\mathop{\rm ind}}
\def\Re{\mathop{\rm Re}}
\def\vol{\mathop{\rm vol}}
\def\SO{\mathbin{\rm SO}}
\def\Im{\mathop{\rm Im}}
\def\min{\mathop{\rm min}}
\def\Spec{\mathop{\rm Spec}\nolimits}
\def\Hol{{\textstyle\mathop{\rm Hol}}}
\def\ge{\geqslant}
\def\le{\leqslant}
\def\Z{{\mathbin{\mathbb Z}}}
\def\R{{\mathbin{\mathbb R}}}
\def\N{{\mathbin{\mathbb N}}}
\def\E{{\mathbb E}}
\def\V{{\mathbb V}}
\def\K{{\mathbb K}}
\def\D{{\mathbb D}}
\def\al{\alpha}
\def\be{\beta}
\def\ga{\gamma}
\def\de{\delta}
\def\ep{\epsilon}
\def\io{\iota}
\def\ka{\kappa}
\def\la{\lambda}
\def\ze{\zeta}
\def\th{\theta}
\def\vp{\varphi}
\def\si{\sigma}
\def\up{\upsilon}
\def\om{\omega}
\def\De{\Delta}
\def\Ga{\Gamma}
\def\Th{\Theta}
\def\La{\Lambda}
\def\Om{\Omega}
\def\ts{\textstyle}
\def\sst{\scriptscriptstyle}
\def\sm{\setminus}
\def\op{\oplus}
\def\ot{\otimes}
\def\bigop{\bigoplus}
\def\iy{\infty}
\def\ra{\rightarrow}
\def\longra{\longrightarrow}
\def\dashra{\dashrightarrow}
\def\t{\times}
\def\w{\wedge}
\def\d{{\rm d}}
\def\bs{\boldsymbol}
\def\ci{\circ}
\def\ti{\tilde}
\def\ov{\overline}
\def\md#1{\vert #1 \vert}
\def\nm#1{\Vert #1 \Vert}
\def\bmd#1{\big\vert #1 \big\vert}
\def\cnm#1#2{\Vert #1 \Vert_{C^{#2}}} 
\def\lnm#1#2{\Vert #1 \Vert_{L^{#2}}} 
\def\bnm#1{\bigl\Vert #1 \bigr\Vert}
\def\bcnm#1#2{\bigl\Vert #1 \bigr\Vert_{C^{#2}}} 
\def\blnm#1#2{\bigl\Vert #1 \bigr\Vert_{L^{#2}}} 
\title[Lagrangian-type submanifolds of $Spin(7)$ manifolds and their deformations]{Lagrangian-type submanifolds of $Spin(7)$ manifolds and their deformations}
\author{Rebecca Glover and Sema Salur}
\thanks{S.Salur is partially supported by NSF grant 1105663}
\keywords{calibrations, manifolds with special holonomy}
\address{Department  of Mathematics, University of St. Thomas, St. Paul, MN, 55105}
\email{rebecca.glover@stthomas.edu}
\address {Department of Mathematics, University of Rochester, Rochester, NY, 14627}
\email{sema.salur@rochester.edu } \subjclass{53C38,  53C29, 57R57}
\date{\today}

\begin{abstract} In an earlier paper  \cite{GS} we showed that the space of deformations of a smooth, compact, orientable Harvey-Lawson submanifold $HL$ in a $G_2$ manifold $M$ can be identified with the direct sum of the space of smooth functions and closed 2-forms on $HL$. In \cite{GS}, we also introduced a new class of Lagrangian-type 4-dimensional submanifolds inside $G_2$ manifolds, called them RS submanifolds, and proved that the space of deformations of a smooth, compact, orientable $RS$ submanifold in a $G_2$ manifold $M$ can be identified with closed 3-forms on $RS$.  In this short note, we define a new class of Lagrangian-type 4-dimensional submanifolds inside $Spin(7)$ manifolds, which we call $L$-submanifolds.  We show that the space of deformations of a smooth, compact, orientable $L$-submanifold in a $Spin(7)$ manifold $N$ can be identified with the space of closed 3-forms on $L$. 
\end{abstract}
\date{}
\maketitle
\section{Introduction}



The study of Lagrangian submanifolds is an important topic in the field of symplectic geometry.  Lagrangian submanifolds reveal information about Hamiltonian mechanics, symplectic rigidity, and local invariants of symplectic manifolds.  Further, the moduli spaces of Lagrangian submanifolds have been extensively used to attempt to solve Kontsevich's Mirror Symmetry Conjecture.  One may ask if similar objects exist in other geometric settings.  

\vspace{.1in}

In this paper, we consider the exceptional geometry of Spin(7) manifolds and define Lagrangian-type structures in this geometric setting.  Let $(N, \Psi)$ be a $Spin(7)$ manifold.  We define an $L$-submanifold to be a $4$-dimensional submanifold of $N$ such that $\Psi |_{L} = 0$.  Note that this is analogous to the condition for a submanifold of a symplectic manifold to be Lagrangian.  Here, we discuss properties of these objects and prove that the deformation space of $L$-submanifolds is unobstructed and infinite-dimensional.  In particular, we prove the following theorem.

\begin{thm} Suppose $N$ is a $Spin(7)$ manifold such that $T(N)$ admits a non-vanishing $3$-frame field.  The space of infinitesimal deformations of a smooth, compact,
orientable 4-dimensional $L$-submanifold of $N$ within the class of $L$-submanifolds is infinite-dimensional. The deformation space can be identified with closed differential 3-forms on $L$.
\end{thm}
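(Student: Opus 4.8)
\emph{Proposed proof.} The plan is to run the standard first-order deformation analysis for a submanifold cut out by the vanishing of the \emph{closed} form $\Psi$, and then to recognise the linearised operator as the exterior derivative on $3$-forms; the outcome (closed $3$-forms) will be in direct analogy with the $RS$-submanifold case treated previously. Fix a metric on $N$ and a tubular neighbourhood of $L$; via the normal exponential map, the $4$-dimensional submanifolds of $N$ that are $C^1$-close to $L$ are parametrised by small sections $v$ of the normal bundle $\nu = TN|_L/TL$, with $L$ itself corresponding to $v=0$. Let $L_v$ denote the submanifold determined by $v$. The condition that $L_v$ be an $L$-submanifold is $\Psi|_{L_v}=0$. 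Viewing $\Psi|_{L_v}$ as an element of $\Omega^4(L)$ depending on $v$ and differentiating at $v=0$, Cartan's formula together with $d\Psi=0$ gives
\[
  \frac{d}{dt}\Big|_{t=0}\Psi|_{L_{tv}} \;=\; (\mathcal L_v\Psi)\big|_L \;=\; (d\,\iota_v\Psi)\big|_L \;=\; d\bigl(\iota_v\Psi|_L\bigr);
\]
moreover $\iota_w\Psi|_L=0$ whenever $w$ is tangent to $L$, since $\Psi|_L=0$, so $\iota_v\Psi|_L$ depends only on the normal part of $v$. Thus the space of infinitesimal deformations of $L$ within the class of $L$-submanifolds is $\{\,v\in\Gamma(\nu) : d(\iota_v\Psi|_L)=0\,\}$.

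The key point is that the bundle map $\Phi\colon\nu\to\Lambda^3T^*L$, $\Phi(v)=\iota_v\Psi|_L$, between rank-$4$ bundles, is an isomorphism. I would prove this by a pointwise computation in the flat model $(\R^8,\Psi_0)$: $Spin(7)$ acts on the oriented $4$-planes $V\subset\R^8$ with $\Psi_0|_V=0$, so it is enough to check $\Phi$ at one such plane, for instance $V_0=\langle e_1,e_2,e_3,e_5\rangle$, with $V_0^\perp=\langle e_4,e_6,e_7,e_8\rangle$; a direct contraction of the Cayley $4$-form shows that $\iota_{e_4}\Psi_0|_{V_0}$, $\iota_{e_6}\Psi_0|_{V_0}$, $\iota_{e_7}\Psi_0|_{V_0}$, $\iota_{e_8}\Psi_0|_{V_0}$ equal, up to sign, $dx_1\w dx_2\w dx_3$, $dx_1\w dx_2\w dx_5$, $dx_1\w dx_3\w dx_5$, $dx_2\w dx_3\w dx_5$, a basis of $\Lambda^3V_0^*$. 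The hypothesis that $TN$ admits a non-vanishing $3$-frame field is what I expect to be needed to promote this fibrewise statement to a genuine bundle isomorphism over all of $L$: it supplies the reduction of structure group that identifies $\nu$ with $\Lambda^3T^*L$ (equivalently with $T^*L$, hence with $TL$) coherently along $L$, and that controls the $Spin(7)$-orbit type of the tangent planes of $L$.

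Once $\Phi$ is known to be a bundle isomorphism, it induces an isomorphism $\Gamma(\nu)\cong\Omega^3(L)$, under which the linearised equation $d(\iota_v\Psi|_L)=0$ becomes $d\al=0$ for $\al=\iota_v\Psi|_L$. Hence the infinitesimal deformation space is identified with the space $Z^3(L)$ of closed $3$-forms on $L$. This space is infinite-dimensional: it contains the infinite-dimensional space $d\,\Omega^2(L)$ of exact $3$-forms, and by Hodge theory on the compact oriented $4$-manifold $L$ one has $Z^3(L)\cong d\,\Omega^2(L)\op H^3(L;\R)$ with the second summand finite-dimensional. This establishes the theorem.

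The step I expect to be the real obstacle is showing that $\Phi$ is a bundle isomorphism. This needs both the octonionic linear algebra at a model $L$-plane \emph{and} a careful account of how the $3$-frame field secures the global identification $\nu\cong\Lambda^3T^*L$ (and rules out any exceptional tangent-plane behaviour). The linearisation step and the concluding dimension count are routine.
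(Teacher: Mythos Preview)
Your argument is essentially the paper's: linearise the condition $\Psi|_{L_v}=0$ via Cartan's formula and $d\Psi=0$ to obtain $v\mapsto d(\iota_v\Psi|_L)$, then use the isomorphism $\nu\cong\Lambda^3T^*L$ to identify the kernel with closed $3$-forms. The one difference is how that isomorphism is established: the paper uses the vector-valued $3$-form $\tau$ (the triple cross product) together with a lemma that $\{u\times v\times w,\ u\times v\times z,\ u\times w\times z,\ v\times w\times z\}$ is an orthonormal frame for $N(L)$ whenever $\{u,v,w,z\}$ frames $TL$, whereas you verify $\Phi(v)=\iota_v\Psi|_L$ directly on a model $L$-plane in $(\R^8,\Psi_0)$. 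Since $\langle\tau(a,b,c),v\rangle=(\iota_v\Psi)(a,b,c)$, these are the same map viewed tangentially versus cotangentially; your route is self-contained, while the paper's makes the orthonormality of the induced normal frame explicit. Your closing sentence supplying infinite-dimensionality via $d\,\Omega^2(L)\subset Z^3(L)$ fills in a step the paper leaves implicit, and your unease about the $3$-frame-field hypothesis is warranted: the paper invokes it to set up a global Cayley splitting of $TN$, but its role in the linearisation itself is no more explicit there than in your write-up.
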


\section{Deformations of $L$-submanifolds}\label{HLsub}

 Before discussing infinitesimal deformations of $L$-submanifolds, we'll first review some basic definitions of $Spin(7)$ manifolds.  For more details, we refer the reader to \cite{HL}.

\vspace{.1in}

\begin{dfn} An 8-dimensional Riemannian manifold $(N,\Psi)$ is called a
{\em Spin(7) manifold} if the holonomy group of its metric
connection lies in $Spin(7)\subset GL(8)$.

\end{dfn} 

\vspace{.1in}

Equivalently, a $ Spin(7) $ manifold is an $8$-dimensional
Riemannian manifold with a triple cross product $\times $ on its
tangent bundle, and a closed $4$-form $\Psi \in \Omega^{4}(N)$
with
$$ \Psi (u,v,w,z)=\langle u \times v \times w,z \rangle.$$

\begin{dfn}  A 4-dimensional submanifold $X$ of a $Spin(7)$ manifold $(N,\Psi)$ is
called {\em Cayley} if $\Psi|_X\equiv vol(X)$. 
\end{dfn}

\vspace{.05in}

Analogous to the $G_2$ case, we introduce a tangent bundle-valued
$3$-form, which is simply the triple cross product of $N$.

\begin{dfn}  Let $(N, \Psi )$ be a $Spin(7)$ manifold. Then $\Upsilon \in
\Omega^{3}(N, TN)$ is the tangent bundle-valued 3-form defined by
the identity:
\begin{equation*}
\langle \tau (u,v,w) , z \rangle=\Psi  (u,v,w,z)=\langle
u\times v\times w , z \rangle .
\end{equation*}
\end{dfn} 

\vspace{.05in}

\begin{dfn}
A submanifold $L \subset N$ is called an {\it L-submanifold} if $\Psi|_L = 0$.
\end{dfn}

\vspace{.05in}

\begin{rem}
 Let $(N,\omega, \Omega)$ be a complex
4-dimensional Calabi-Yau manifold with K\"{a}hler form $\omega$
and a nonvanishing holomorphic (4,0)-form $\Omega$.  In this particular case, $N$ has a $Spin(7)$ structure given as $ \Psi= \Re \Omega + \frac{1}{2} \omega \wedge \omega$. Using these structures, it is easy to show that all special Lagrangian submanifolds of $N$ (with phase $\theta=\frac{\pi}{2}$) will be also $L$-submanifolds of $N$.

\end{rem}

\begin{rem}
$Spin(7)$ manifolds can also be constructed from $G_2$
manifolds. Let $(M,\varphi)$ be a $G_2$ manifold with a 3-form
$\varphi$, then $M\times S^1$ (or $M\times \mathbb{R}$) has holonomy group $G_2\subset Spin(7)$, hence is
a Spin(7) manifold. In this case $\Psi= \varphi\wedge dt +
*_{7} \varphi$, where $*_{7}$ is the Hodge star operator of $M^7$. This implies that all objects of the form HL$\times \mathbb{R}$, where HL are Harvey-Lawson submanifolds of $M$ (see \cite{AS1}, \cite{GS}), are $L$-submanifolds of $M\times \mathbb{R}$. 
\end{rem}

We now form an analogous construction for $Spin(7)$ manifolds as we did for $G_2$ manifolds \cite{AS1}.  Let $(N, \Psi)$ be a $Spin(7)$ manifold.  In this paper, we'll only consider $Spin(7)$ manifolds $N$ such that $T(N)$ admits a nonvanishing $3$-frame field $\Lambda =\langle u,v,w \rangle$.  Under this assumption, we can decompose $T(N)={\K}\oplus{\D}$, where ${\K}=\langle u,v,w, u\times v\times w \rangle$ is the bundle of Cayley $4$-planes  (where $\Psi$ restricts to be 1) and ${\bf D}$ is the complementary subbundle (note that this is also a bundle of Cayley $4$-planes since the form $\Psi$ is self dual). 
On a chart in $N$ let  $e_1,...e_8$ be an orthonormal frame and $e^1,...,e^8$ be the dual coframe.  Then the calibration
4-form is given as (c.f. \cite{HL}) 
\begin{equation}
\begin{aligned}
\Psi=\;\; e^{1234}&+(e^{12}-e^{34})\wedge (e^{56}-e^{78})\\
&+(e^{13}+e^{24})\wedge (e^{57}+e^{68})\\
&+(e^{14}-e^{23})\wedge (e^{58}-e^{67})+e^{5678},\\
\end{aligned}
\end{equation}

\noindent which is a self dual $4$-form, and the corresponding tangent bundle-valued 3-form is

\begin{equation*}
\begin{aligned}
\tau=&\;\;(e^{234}+e^{256}-e^{278}+e^{357}+e^{368}+e^{458}-e^{467})e_1\\
&+(-e^{134}-e^{156}+e^{178}+e^{457}+e^{468}-e^{358}+e^{367})e_2\\
&+(e^{124}-e^{456}+e^{478}-e^{157}-e^{168}+e^{258}-e^{267})e_3\\
&+(-e^{123}+e^{356}-e^{378}-e^{257}-e^{268}-e^{158}+e^{167})e_4\\
&+(e^{126}-e^{346}+e^{137}+e^{247}+e^{148}-e^{238}+e^{678})e_5\\
&+(-e^{125}+e^{345}+e^{138}+e^{248}-e^{147}+e^{237}-e^{578})e_6\\
&+(-e^{128}+e^{348}-e^{135}-e^{245}+e^{146}-e^{236}+e^{568})e_7\\
&+(e^{127}-e^{347}-e^{136}-e^{246}-e^{145}+e^{235}-e^{567})e_8.\\
\end{aligned}
\end{equation*}

\vspace{.1in}

Since $\Psi$ is self-dual, L-submanifolds are not in the complement space of Cayley submanifolds (unlike their $G_2$ counterparts Harvey-Lawson submanifolds).  However, similar to Harvey-Lawson manifolds, we can use a vector-valued form to define them.

There is a natural vector bundle $V$ on a $Spin(7)$ manifold given by $V = E \times_{\rho} \mathbb{R}^7$, where $E$ is the principal coframe bundle on $N$ and $\rho$ is the representation of $Spin(7)$ on $\mathbb{R}^7$ induced from the standard representation of $SO(7)$ on $\mathbb{R}^7$.  Further, we can define a $V$-valued $4$-form $\eta$ that vanishes identically on the Cayley submanifolds of $N$.  As shown in \cite{mclean}, this form, $\eta$, can be locally given by the components

\begin{eqnarray*}( -e^{1567}-e^{1347} - e^{1246} + e^{1235} + e^{2348} + e^{2568} + e^{3578} - e^{4678}), \\
(-e^{2567} - e^{2347} + e^{1236} + e^{1245}- e^{1348}- e^{1568} + e^{4578}+ e^{3678}), \\
(-e^{3567} + e^{1237} + e^{2346} + e^{1345}+ e^{1248} - e^{4568} - e^{1578}- e^{2678}), \\
(-e^{4567} +e^{1247} - e^{1346} + e^{2345}- e^{1238}+ e^{3568} - e^{2578} + e^{1678}), \\
(-e^{3457} + e^{1257} - e^{2456} - e^{1356}+ e^{1268} - e^{3468}+ e^{1378} + e^{2478}),  \\
(-e^{3467}+ e^{1267}- e^{2356}+ e^{1456}- e^{1258} + e^{3458} - e^{1478}+ e^{2378}), \\
(e^{2467} + e^{1367} - e^{2357} + e^{1457}- e^{1358} - e^{2458} + e^{1468}- e^{2368}). \end{eqnarray*}
One can check that a four-dimensional manifold is an $L$-submanifold if and only if 
\[ \langle \eta|_{L},  \eta|_{L} \rangle = 1.\]
Note that this is equivalent to the Cayley inequality as given in \cite{HL}:
\[ \Psi(u,v,w,z)^2 + |\eta( u\times v\times w\times z)|^2 = |u\wedge v\wedge w\wedge z |^2 .\]

\vspace{.1in}

We now describe the normal bundle of an $L$-submanifold inside the $Spin(7)$ manifold $N$. Recall that an orthonormal $4$-frame field $<u,v,w, u\times v\times w>$ on $(N,\Psi )$ is called a {\it Cayley-frame field}.  Again, we assume there exists a nonvanishing $3$-frame field $\Lambda = <u,v,w>$ on $N$.


 \vspace{.1in}
 
 

Given such a $3$-plane field $u,v,w$, we define $R_{uvw}$ to be the vector field given by the triple cross product
$$R_{uvw}=\tau(u,v,w)=u\times v\times w .$$ 

\pagebreak

\begin{lem} The following properties hold:

\begin{itemize}
\item[(a)]  If  $\E=< u,v,w,z>$ is an $L$-plane field, then $$\V=< R_{uvw}, R_{uvz}, R_{uwz}, R_{vwz}>$$ is also an $L$-plane field. 
\item[(b)] $\E \perp \V$.
 \item[(c)] $\{u,v,w, z, R_{uvw}, R_{uvz}, R_{uwz}, R_{vwz} \}$ is an orthonormal frame field on $N$.
\end{itemize}
\end{lem}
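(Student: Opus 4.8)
The plan is to verify the three statements by direct linear-algebra computation at a point, working in an adapted orthonormal frame, and then observing that the identities are pointwise so they hold along the submanifold. Since $\E = \langle u,v,w,z\rangle$ is an $L$-plane, we may choose the orthonormal frame $e_1,\dots,e_8$ with $e_1=u$, $e_2=v$, $e_3=w$, $e_4=z$, so that the condition $\Psi|_\E = 0$ imposes algebraic constraints on the frame. The key point is that an oriented $4$-plane is Cayley precisely when $\Psi$ restricts as the volume form, and is an $L$-plane when $\Psi$ restricts to zero; for such planes the second component of the Cayley/$L$ decomposition (captured by the $V$-valued form $\eta$ with $\langle\eta|_L,\eta|_L\rangle = 1$) carries the relevant structure. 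I would use the explicit formula for $\tau$ displayed above to write out $R_{uvw}=\tau(e_1,e_2,e_3)$, $R_{uvz}=\tau(e_1,e_2,e_4)$, $R_{uwz}=\tau(e_1,e_3,e_4)$, $R_{vwz}=\tau(e_2,e_3,e_4)$ in coordinates, each as an explicit $\R^8$-vector whose entries are the coefficients of $e^{ijk}$ read off from the eight lines of the $\tau$ formula.

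For part (b), I would compute $\langle R_{uvw}, e_a\rangle$ for $a\in\{1,2,3,4\}$ and similarly for the other three $R$'s. By the defining identity $\langle\tau(u,v,w),z\rangle = \Psi(u,v,w,z)$, each such inner product equals a value $\Psi(e_i,e_j,e_k,e_a)$ with $\{i,j,k\}\subset\{1,2,3,4\}$ and $a\in\{1,2,3,4\}$, hence $\Psi$ evaluated on four vectors lying in $\E$ (with a repetition unless the four indices are distinct), which is $\pm\Psi|_\E$ evaluated on a basis element — and this vanishes by the $L$-condition $\Psi|_\E=0$. (When there is a repeated index the value is automatically $0$ by antisymmetry.) This establishes $\V\perp\E$. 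For part (c), it remains to show the four vectors $R_{uvw},R_{uvz},R_{uwz},R_{vwz}$ are themselves orthonormal. Unit length of each $R$ follows from the Cayley equality quoted in the excerpt: taking $\{u,v,w,z\}$ orthonormal, $|\tau(u,v,w)|$ relates to $|u\wedge v\wedge w|$ minus a correction that vanishes on the relevant planes; more directly, $\langle\tau(e_i,e_j,e_k),\tau(e_i,e_j,e_k)\rangle$ can be read off as a sum of squares of the seven coefficients in the corresponding expression, which I expect to total $1$ when $\Psi|_\E=0$. Mutual orthogonality, e.g. $\langle R_{uvw}, R_{uvz}\rangle = \langle \tau(e_1,e_2,e_3),\tau(e_1,e_2,e_4)\rangle$, I would again reduce to an expression in the frame components and show it vanishes as a consequence of the $L$-plane constraints on $e_1,\dots,e_4$.

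Part (a) is the one requiring the most care and is where I expect the main obstacle. One must show $\Psi|_\V = 0$, i.e. $\Psi(R_{uvw},R_{uvz},R_{uwz},R_{vwz}) = 0$ together with the vanishing of $\Psi$ on any four of the $R$'s with a repetition — the latter is automatic by antisymmetry, so the content is the single four-fold evaluation $\Psi(R_{uvw},R_{uvz},R_{uwz},R_{vwz})$. The cleanest route I can see is to use the $Spin(7)$-equivariance of the whole construction: the map sending an oriented orthonormal $4$-frame $(u,v,w,z)$ spanning an $L$-plane to the $4$-frame $(R_{uvw},R_{uvz},R_{uwz},R_{vwz})$ intertwines the $Spin(7)$-action, and $Spin(7)$ acts transitively on the set of $L$-planes (equivalently on oriented orthonormal bases of a fixed $L$-plane, up to the stabilizer). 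Hence it suffices to check $\Psi|_\V=0$ for one standard choice of $L$-plane — a direct numerical computation with the explicit formula for $\Psi$ above once the $R$-vectors are written out in coordinates. Alternatively one can verify the identity purely algebraically using the octonion-multiplication description of the triple cross product, where $u\times v\times w$ corresponds to an associator-type expression and the needed vanishing becomes an octonion identity; I would fall back on this if the equivariance argument needs the transitivity statement spelled out. Either way, parts (b) and (c) are short consequences of the defining identity for $\tau$ and the Cayley equality, while part (a) is the substantive computation.
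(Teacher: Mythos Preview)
The paper states this lemma without proof, so there is no argument to compare against directly; what follows is an assessment of your outline on its own terms, together with the one-line argument the paper's surrounding discussion implicitly supplies.

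Your setup contains a frame inconsistency. You propose to take $e_1=u,\,e_2=v,\,e_3=w,\,e_4=z$ and then use the paper's explicit coordinate formula for $\tau$. But that formula is written in a $Spin(7)$-adapted coframe in which $\langle e_1,e_2,e_3,e_4\rangle$ is a \emph{Cayley} plane (indeed $\Psi(e_1,e_2,e_3,e_4)=1$ there), so it cannot simultaneously be an $L$-plane. Your argument for (b) survives, because it uses only the frame-free identity $\langle\tau(u,v,w),z\rangle=\Psi(u,v,w,z)$ together with $\Psi|_\E=0$; but your plan to ``read off coefficients'' for (c) from that explicit $\tau$ would inherit the inconsistency. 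For (c), the clean frame-free route is: unit length follows from $|u\times v\times w|=|u\wedge v\wedge w|$, and mutual orthogonality from the fact that for orthonormal $u,v$ the map $w\mapsto u\times v\times w$ is an isometry of $\{u,v\}^\perp$ (so $\langle R_{uvw},R_{uvz}\rangle=\langle w,z\rangle=0$, and similarly for the other pairs). Neither of these needs the $L$-condition.

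For (a) you are working much harder than necessary. Once (b) and (c) are done, the four $R$-vectors are an orthonormal basis of $\E^\perp$. The paper has already emphasized that $\Psi$ is self-dual, and this alone finishes the job: writing $\xi_\E$ for the unit volume element of $\E$, one has
\[
\Psi(\E^\perp)=\langle\Psi,\ast\xi_\E\rangle=\langle\ast\Psi,\xi_\E\rangle=\langle\Psi,\xi_\E\rangle=\Psi(\E)=0,
\]
so $\V=\E^\perp$ is automatically an $L$-plane. Your proposed route via transitivity of $Spin(7)$ on $L$-planes would also work, but that transitivity is itself a nontrivial statement you would then need to justify; the self-duality argument avoids it entirely.
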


 \vspace{.1in}

In particular we can express $\Psi$ as

\begin{eqnarray*} \Psi & = & u\wedge v\wedge w \wedge (u\times v\times w) \\
& & + [ u\wedge v- w\wedge (u\times v \times w)] \wedge [ z\wedge (u\times v\times z) - (u\times w\times z) \wedge (v\times w \times z)] \\
& & + [u\wedge w + v\wedge (u\times v \times w)] \wedge[ z\wedge (u\times w \times z) + (u\times v\times z)\wedge (v\times w\times z) ]\\
& & + [u\wedge (u\times v\times w) - v\wedge w ] \wedge [ z\wedge (v\wedge w \wedge z) - (u\wedge v\wedge z) \wedge (u\wedge w\wedge z)] \\
& & + z \wedge (u\wedge v\wedge z) \wedge (u\wedge w\wedge z) \wedge (v\wedge w\wedge z) .\end{eqnarray*}
Rewriting this form in terms of our vector fields, we have

\begin{eqnarray*} \Psi &= &u\wedge v\wedge w \wedge R_{uvw}\\
& & + [u\wedge v-w\wedge R_{uvw}] \wedge [ z\wedge R_{uvz} - R_{uwz} \wedge R_{vwz} ] \\
& & + [u\wedge w + v\wedge R_{uvw}] \wedge[ z\wedge R_{uwz} + R_{uvz} \wedge R_{vwz} ] \\
& & + [u\wedge R_{uvw} - v\wedge w ] \wedge [z\wedge R_{vwz} - R_{uvz} \wedge R_{uwz}]\\
& & + z\wedge R_{uvz} \wedge R_{uwz} \wedge R_{vwz} .\end{eqnarray*}
Since 
\[ \langle \tau (u,v,w) , z \rangle=\Psi  (u,v,w,z)=\langle
u\times v\times w , z \rangle=0\] 
for an $L$-submanifold, there exists an isomorphism between the tangent bundle $T(L)$ of an $L$-submanifold and its normal bundle $N(L)$, where $N(L)$ is generated by vector fields $R_{uvw}, R_{uvz}, R_{uwz}$, and $R_{vwz}$.  The vector-valued $3$-form $\tau$ induces this isomorphism.

\vspace{.1in}

Following \cite{GS}, we now discuss deformations of $L$-submanifolds in a $Spin(7)$ manifold.

\vspace{.1in}

\begin{thm} The space of infinitesimal deformations of a smooth, compact,
orientable 4-dimensional $L$-submanifold of a $Spin(7)$
manifold $N$ within the class of $L$-submanifolds is infinite-dimensional.  Further, the deformation space can be identified with the space of closed 3-forms on $L$.
\label{thm1}
\end{thm}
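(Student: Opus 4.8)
The plan is to follow the standard McLean-type deformation analysis, using the isomorphism $T(L)\cong N(L)$ induced by the vector-valued $3$-form $\tau$. First I would parametrize nearby submanifolds: given an $L$-submanifold $L\subset N$ with the orthonormal frame $\{u,v,w,z,R_{uvw},R_{uvz},R_{uwz},R_{vwz}\}$ adapted to $L$ as in the Lemma, a section $\nu\in C^\infty(N(L))$ of the normal bundle corresponds via $\tau$ to a $3$-form on $L$ (using the identification $N(L)\cong T(L)\cong \Lambda^3 T^*L$ via the metric and orientation). Deforming $L$ in the direction $\nu$ via the exponential map produces a family $L_t$, and the condition for $L_t$ to remain an $L$-submanifold is $\Psi|_{L_t}=0$, a single equation in $\Omega^4(L_t)\cong C^\infty(L)$. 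Linearizing at $t=0$, the infinitesimal deformation operator sends $\nu$ (equivalently its associated $3$-form $\xi$) to the derivative of $\Psi|_{L_t}$, which should be expressible, after using $d\Psi=0$ and the Cartan formula $\mathcal L_\nu\Psi = d\iota_\nu\Psi + \iota_\nu d\Psi = d\iota_\nu\Psi$, as $d$ of the $3$-form obtained by contracting $\Psi$ with $\nu$. The key computation is to show that under the $\tau$-identification this contraction $\iota_\nu\Psi|_L$ is exactly $\xi$ (up to a universal constant), so that the linearized operator is simply $\xi\mapsto d\xi$ on $\Omega^3(L)$.

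The next step is to read off the deformation space. Once the linearized operator is identified with $d:\Omega^3(L)\to\Omega^4(L)$, its kernel is the space of closed $3$-forms on $L$, which is infinite-dimensional since $\Omega^3(L)$ is infinite-dimensional and $d$ has finite-dimensional cokernel $H^4(L;\R)\cong\R$ (as $L$ is compact, orientable, $4$-dimensional). This immediately gives both assertions of the theorem: infinite-dimensionality and the identification with closed $3$-forms. I would also remark that, in contrast to the elliptic deformation problems for calibrated submanifolds, here the linearization is the (highly non-elliptic, underdetermined) exterior derivative, which is why the deformation space is infinite-dimensional and automatically unobstructed — every closed $3$-form integrates to an actual deformation because there is no obstruction map, $d$ being onto its image with the obstruction living in $H^4$ but the relevant equation being satisfiable order by order.

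For unobstructedness I would argue as follows: write the full nonlinear equation $F(\xi)=\Psi|_{L_\xi}=0$ as $F(\xi)=d\xi + Q(\xi)$ where $Q$ collects the higher-order terms and is at least quadratic in $\xi$ and its first derivatives. Given an infinitesimal deformation $\xi_1$ with $d\xi_1=0$, one seeks a genuine family $\xi(t)=t\xi_1+t^2\xi_2+\cdots$ solving $F=0$; the obstruction at each order lies in $\Omega^4(L)$, and since $d:\Omega^3(L)\to\Omega^4(L)$ is surjective onto the exact $4$-forms, one must check the order-$k$ term is exact. This is where the main obstacle lies: showing that the successive error terms are exact (equivalently have vanishing integral over $L$). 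I expect this follows because $\int_L \Psi|_{L_\xi}$ is a topological/homological quantity — it depends only on the homology class of $L_\xi$, which is unchanged under the deformation, so $\int_L F(\xi)=\int_L\Psi|_L = 0$ for all $\xi$; hence every error term integrates to zero and is therefore exact, allowing the iteration to proceed (with convergence handled by the usual implicit-function-theorem argument in suitable Hölder or Sobolev spaces, noting one must quotient by the infinite-dimensional space of closed forms or fix a slice via a gauge condition such as $d^*\xi=0$ to make the linearization an isomorphism onto its image and run the implicit function theorem). The honest difficulty throughout is bookkeeping the tensorial identity that the linearization of $\Psi|_{L_\xi}$ is $d\xi$; this rests on the explicit formula for $\Psi$ in the adapted frame displayed above and the structure of $\tau$, and is the computational heart of the argument.
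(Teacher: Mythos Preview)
Your approach is essentially the same as the paper's: both linearize the deformation map $V\mapsto(\exp_V)^*\Psi$ via Cartan's formula, use $d\Psi=0$ to reduce to $d(\iota_V\Psi)|_L$, and then invoke the $\tau$-isomorphism $N(L)\cong\Lambda^3T^*L$ to identify the linearized operator with the exterior derivative on $3$-forms, whence the kernel is the closed $3$-forms. Your discussion of unobstructedness and the implicit function theorem goes well beyond what the paper actually proves (the paper stops at the linearization, since the theorem is stated only for \emph{infinitesimal} deformations), so that portion is extra but not wrong.
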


\begin{proof} The deformation map, $F$, for a small vector field $V$ is defined from the space of sections of the normal bundle,
$\Gamma(N(L))$, to the space of differential $3$-forms,
$\Lambda^3T^*(L)$, such that
\begin{equation*}
\begin{split}
&F: \Gamma(N(L))\rightarrow
\Lambda^3T^*(L) , \\
&F(V)=((\exp_V)^*(\Psi|_{L_V})).
\end{split}
\end{equation*}
Further, the map $F$ is the restriction of $\Psi$ to $L_{V}$ and then pulled back to $L$ via $(\exp_V)^*$, where
$\exp_V$ is the normal exponential map that takes $L$ diffeomorphically
onto its image $L_V$ in a neighborhood around 0.

\vspace{.1in}
We can naturally identify normal vector fields to $L$ with differential
$3$-forms on $L$.  Further, since $L$ is compact, these normal
vector fields can be identified with nearby submanifolds.  Thus, the kernel of $F$ corresponds to deformations of $L$.

\vspace{.1in}

The linearization of $F$ at $(0)$ is given by
\begin{equation*}
dF(0):\Gamma(N(L))\rightarrow \Lambda^3T^*(L)
\end{equation*}
\noindent where
\begin{equation*}
\begin{split}
dF(0)(V)=&\frac{\displaystyle\partial}{\displaystyle\partial{t}}F(tV)|_{t=0}=\frac{\displaystyle\partial}{\displaystyle\partial{t}}[\exp_{tV}^*(\Psi)] \\
=&[{\mathcal L}_{V}(\Psi)|_{L}].
\end{split}
\end{equation*}

\vspace{.1in}

Further, by Cartan's formula, we have
\begin{equation*}
\begin{split}
dF(0)(V)&=((i_Vd\Psi +d(i_V\Psi))|_{L}\\
&=d(i_V\Psi)|_{L}=(d\star v),
\end{split}
\end{equation*}
where $i_V$ represents the interior derivative, $v$ is the dual
$3$-form to the vector field $V$ with respect to the induced
metric, and $\star v$ is the Hodge dual of $v$ on L. Hence
\begin{equation*}
dF(0)(V) =(d\star v)=(d^*v).
\end{equation*}
We can now identify the space of nontrivial deformations of $L$-submanifolds with closed $3$-forms on $L$. 

\end{proof}

\begin{rem}  Recall that for a Lagrangian submanifold $S$ of $(N^{2n}, \omega)$, there exists an almost complex structure $J$ on $N$ such that $J$ maps vectors on $S$ to vectors orthogonal to $S$.  In other words, 
\[ \langle Ju, v \rangle = 0 \]
for all $u, v\in TS$.  As a final remark, we note that we can construct analogous structures for Harvey-Lawson and RS submanifolds (as submanifolds of a $G_2$ manifold) and $L$-submanifolds (as submanifolds of a $Spin(7)$ manifold).  

Given a Harvey-Lawson submanifold $HL$ of a $G_2$ manifold $(M, \varphi)$, we can show that the cross product $\times$ acts as this almost complex structure on $M$.  Further, given an RS-submanifold of $M$, the vector-valued $3$-form $\chi$, given by $\chi(u,v,w) = -u\times(v\times w)$ assumes this role, taking tangent vectors on the RS submanifold to normal vectors. Finally, given an $L$-submanifolds of a $Spin(7)$ manifold $(N, \Psi)$, the vector-valued 3-form $\tau$ takes tangent vectors on the $L$-submanifold to normal vectors and thus acts as an almost complex structure in this case.
\end{rem}

{\small{\it Acknowledgements.} Some of this work was completed while the authors were visiting BIRS.  We would like to thank BIRS and the organizers of the Women in Geometry workshop.  This is the second part of work that was done when the second author was visiting Cornell University as the Ruth I. Michler Fellow during the Spring of 2015. Many thanks to the mathematics department at Cornell for their hospitality and AWM for their support during the course of this work.}

\end{document}